\newcommand{\norm}[1]{\|#1\|}
\subjclass{91C20, 91D25, 94C15}
\keywords{Garbage disposal game, social network, threshold model, averaging dynamics}
\title{Garbage disposal game on finite graphs }
\author{Hsin-Lun Li}
\date{}
\email{hsinlunl@math.nsysu.edu.tw}
\theoremstyle{definition}
\newtheorem{theorem}{Theorem}
\newtheorem{lemma}[theorem]{Lemma}
\begin{document}

\allowdisplaybreaks

\thispagestyle{firstpage}
\maketitle
\begin{center}
    Hsin-Lun Li
    \centerline{$^1$National Sun Yat-sen University, Kaohsiung 804, Taiwan}
\end{center}
\medskip

\begin{abstract}
    The garbage disposal game involves a finite set of individuals, each of whom updates their garbage by either receiving from or dumping onto others. We examine the case where only social neighbors, whose garbage levels differ by a given threshold, can offload an equal proportion of their garbage onto others. Remarkably, in the absence of this threshold, the garbage amounts of all individuals converge to the initial average on any connected social graph that is not a star.
\end{abstract}

\section{Introduction}
The garbage disposal game comprises a set of \( n \) individuals. Each individual updates their garbage either by receiving garbage from others or by dumping garbage onto others~\cite{hirai2006coalition}. Mathematically, let \( [n] = \{1, \ldots, n\} \) represent the set of all individuals, and let \( x_i(t) \geq 0 \) denote the amount of garbage held by individual \( i \) at time \( t \). The update rule for individual \( i \)'s garbage is given by \( x_i(t+1) = \sum_{j \in [n]} A_{ij}(t) x_j(t) \), where \( A_{ij}(t) \in [0,1] \) represents the proportion of individual \( j \)'s garbage that is dumped onto individual \( i \) at time \( t \). This ensures that \( \sum_{i \in [n]} A_{ij}(t) = 1 \). A vector is \emph{stochastic} if all entries are nonnegative and add up to 1. A square matrix is \emph{row-stochastic} if each row is stochastic, and \emph{column-stochastic} if each column is stochastic. Writing the update mechanism in matrix form:
\begin{equation}\label{model:garbage disposal game}
    x(t+1) = A(t)x(t)
\end{equation}
where
\begin{align*}
    x(t) &= \text{transpose of } (x_1(t), \ldots, x_n(t)) = (x_1(t), \ldots, x_n(t))' \in \mathbb{R}_{\geq 0}^n, \\
    A(t) &\in \mathbb{R}^{n \times n} \text{ is column-stochastic with the } (i,j)\text{-th entry } A_{ij}(t).
\end{align*}
The utility of individual \( i \) at time \( t \) is \( u_i(x_i(t)) \), where \( u_i \) is a decreasing function. This indicates that the more garbage an individual processes, the less utility they derive.

Unlike certain opinion models, such as the voter model, the threshold voter model and the asynchronous Hegselmann-Krause model, where an agent solely updates their opinion at each time step, an agent in the garbage disposal game cannot update their garbage independently if they dump onto others~\cite{li2024imitation,castellano2009statistical,mHK2,lanchier2022consensus,cox1991nonlinear,andjel1992clustering,durrett1992multicolor,durrett1993fixation,liggett1994coexistence,clifford1973model,holley1975ergodic}. In the Hegselmann-Krause (HK) model, an agent updates their opinion by averaging the opinions of their opinion neighbors. In the synchronous HK model, all agents update their opinions at each time step, whereas in the asynchronous HK model, only one agent, uniformly selected at random, updates their opinion at each time step~\cite{mHK,mHK2}. The HK model belongs to averaging dynamics but is not necessarily a garbage disposal game. Moreover, the matrix \( A(t) \) is not always row-stochastic, so the garbage disposal game does not necessarily belong to averaging dynamics.

In this paper, we consider the garbage disposal game, where an individual can dump garbage onto others if and only if they are social neighbors and their garbage differs by at most a confidence threshold $\epsilon > 0.$ In detail, let \( G = ([n], E) \) be an undirected simple social graph with vertex set \( [n] \) and edge set \( E \). An edge \( (i,j) \in E \) symbolizes that agents \( i \) and \( j \) are social neighbors. Let \( G_t = ([n], E_t) \) be a subgraph of \( G \) at time \( t \) with vertex set \( [n] \) and edge set \( E_t = \{(i,j) \in E : |x_i(t) - x_j(t)| \leq \epsilon\} \), recording agents who are social neighbors and whose garbage differs by at most the threshold \( \epsilon \). \( N_i(t) = \{ j \in [n] : (i, j) \in E_t \} \) includes all social neighbors of agent \( i \) at time $t$ whose garbage differs by at most the threshold \( \epsilon \). The proportion of agent \( j \)'s garbage dumping onto agent \( i \) equals \( \frac{1}{|E_t|} \) if \( (i, j) \in E_t \), \( 1 - \frac{|N_i(t)|}{|E_t|}\mathbbm{1}\{E_t\neq\emptyset\} \) if \( i = j \), and 0 otherwise. Namely,
$$
A_{ij}(t) = \frac{1}{|E_t|} \mathbbm{1}\{(i,j) \in E_t\} \quad \text{if } i \neq j, \quad A_{ii}(t) = 1 - \frac{|N_i(t)|}{|E_t|} \mathbbm{1}\{E_t \neq \emptyset\}.
$$
Thus, agents \( i \) and \( j \) dump the same proportion of their garbage onto each other if they are social neighbors and their garbage differs by at most the threshold \( \epsilon \). For instance, if $G_t$ is the star graph of order 6, as shown in Figure~\ref{fig:S6}, then agent 1 can dump \(\frac{1}{5}\) of their garbage onto each of the other agents, and each of the other agents can dump \(\frac{1}{5}\) of their garbage onto agent 1. Note that agent 1 is the center of the star graph and is the only agent emptying their original garbage. The update mechanism is as follows for all \( i \in [n] \):
\begin{equation}\label{model G}
    x_i(t+1) = \frac{\mathbbm{1}\{E_t \neq \emptyset\}}{|E_t|}  \sum_{j \in N_i(t)} x_j(t) + \left(1 - \frac{|N_i(t)|}{|E_t|} \mathbbm{1}\{E_t \neq \emptyset\}\right) x_i(t).
\end{equation}
Observe that the garbage disposal game described in~\eqref{model G} falls within the category of averaging dynamics. We assume that \( x_i(0) \), \( i \in [n] \), are nonnegative real-valued random variables. Let \( a \wedge b \) and \( a \vee b \) denote the \emph{minimum} and \emph{maximum} of \( a \) and \( b \), respectively. A graph \( G \) is termed \(\delta\)-\emph{trivial} if the distance between any two vertices is at most \(\delta\). Let \( V(H) \) represent the vertex set of the graph \( H \). Let \( \mathbbm{1} \) denote the vector with all entries equal to 1. The \emph{convex hull} generated by \( v_1, v_2, \ldots, v_n \in \mathbb{R}^d \) is the smallest convex set containing \( v_1, v_2, \ldots, v_n \). It is defined as follows:
$$ C(\{v_1, v_2, \ldots, v_n\}) = \left\{ v : v = \sum_{i=1}^n \lambda_i v_i \text{ where } (\lambda_i)_{i=1}^n \text{ is a stochastic vector} \right\}. $$
\begin{figure}[h]
    \centering
    \includegraphics[width=0.4\textwidth]{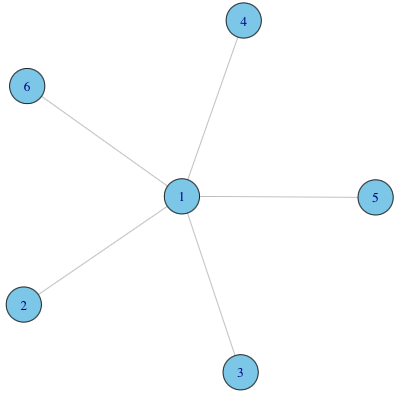}
    \caption{Star graph of order 6}
    \label{fig:S6}
\end{figure}

\section{Main results}

 The $\epsilon$-triviality of the graph $G_s$ implies $G = G_s$, which is equivalent to the case without the threshold $\epsilon$. We will later prove that the graph $G_t$ preserves $\delta$-triviality over time for all $\delta > 0$, leading to $G_t = G$ for all $t \geq s$. Thus, from time $s$ onward, the garbage disposal game is played on the social graph $G$. Consequently, Theorem~\ref{Thm:average} shows that, without the threshold $\epsilon$, the garbage amounts of all agents will converge to the initial average garbage on a connected social graph that is not a star. Note that setting $\epsilon = \infty$ is equivalent to having no threshold \(\epsilon\).

\begin{theorem}\label{Thm:average}
    Assume that 
    \begin{itemize}
        \item the social graph \( G \) is connected and not a star graph, and
        \item the graph \( G_s \) is \(\epsilon\)-trivial for some time \( s \geq 0 \).
    \end{itemize}
    Then, \( x_i(t) \) converges to \( \frac{1}{n} \sum_{k \in [n]} x_k(0) \) as \( t \to \infty \) for all \( i \in [n] \).
\end{theorem}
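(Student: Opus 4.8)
The plan is to reduce the time-varying averaging to iterating a single fixed matrix and then invoke the convergence of powers of a primitive doubly stochastic matrix. First note that for every $t$ the matrix $A(t)$ is symmetric, because $E_t$ is undirected, and row-stochastic by construction, hence doubly stochastic; consequently $\mathbbm{1}'A(t)=\mathbbm{1}'$ and the total garbage $\sum_{k\in[n]}x_k(t)$ is conserved for all $t\ge 0$. In particular any common limit of the $x_i(t)$ must equal $\frac1n\sum_{k\in[n]}x_k(0)$, so it suffices to establish consensus. By the preservation of $\delta$-triviality recorded before the theorem — which is the statement that the range $\max_j x_j(t)-\min_j x_j(t)$ is nonincreasing, since each $x_i(t+1)$ is a convex combination of the $x_j(t)$ — the hypothesis that $G_s$ is $\epsilon$-trivial yields $G_t=G$ for all $t\ge s$. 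Hence for $t\ge s$ the matrix $A(t)$ equals the fixed matrix $A$ with $A_{ij}=\frac{1}{|E|}\mathbbm{1}\{(i,j)\in E\}$ for $i\ne j$ and $A_{ii}=1-\frac{\deg_G(i)}{|E|}$, so that $x(t)=A^{\,t-s}x(s)$.

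The core of the argument is to show that $A$ is primitive, i.e.\ $A^m\to\frac1n\mathbbm{1}\mathbbm{1}'$. Connectedness of $G$ makes $A$ irreducible. For aperiodicity I use the hypothesis that $G$ is not a star: a vertex $i$ satisfies $\deg_G(i)=|E|$ precisely when every edge is incident to $i$, which for a connected graph forces $G$ to be the star centered at $i$. Therefore, when $G$ is connected and not a star, $\deg_G(i)<|E|$ for every $i$, so all diagonal entries $A_{ii}=1-\deg_G(i)/|E|$ are strictly positive. An irreducible stochastic matrix possessing a positive diagonal entry is aperiodic, hence $A$ is primitive. Since $A$ is symmetric and doubly stochastic, its eigenvalues are real, the eigenvalue $1$ is simple with eigenvector $\mathbbm{1}$, and primitivity forces every other eigenvalue to have modulus strictly less than $1$; the Perron–Frobenius convergence theorem for primitive doubly stochastic matrices then gives $A^m\to\frac1n\mathbbm{1}\mathbbm{1}'$.

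Combining the two parts, for $t\ge s$ one gets $x(t)=A^{\,t-s}x(s)\to\frac1n\mathbbm{1}\mathbbm{1}'x(s)=\frac1n\big(\sum_{k\in[n]}x_k(s)\big)\mathbbm{1}$, and by conservation $\sum_{k}x_k(s)=\sum_{k}x_k(0)$, which is exactly the assertion. The main obstacle is the aperiodicity step: one must recognize that the `not a star' hypothesis is precisely what makes \emph{every} diagonal entry positive simultaneously (equivalently, every agent retains a positive share of its own garbage), which is the clean route to primitivity. The only connected graph for which this fails at every vertex is $K_2$, whose averaging matrix is the swap matrix with eigenvalue $-1$ and indeed never reaches the average; excluding stars removes this degeneracy. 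Everything else reduces to routine bookkeeping with doubly stochastic matrices and the monotone range-contraction of averaging dynamics.
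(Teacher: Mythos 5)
Your proof is correct, but it takes a genuinely different route from the paper's. Both arguments begin identically: $\epsilon$-triviality of $G_s$ forces $G_s = G$, and since each $x_i(t+1)$ is a convex combination of the $x_j(t)$ (the paper phrases this as shrinkage of the convex hull, Lemma~\ref{lemma:delta-trivial preserving}), triviality is preserved, so $G_t = G$ and $A(t) \equiv A = I - \frac{1}{|E|}\mathscr{L}$ for all $t \geq s$. From there the paper does \emph{not} exploit that the matrix has become constant: it runs its time-varying machinery, showing the quadratic functional $Z_t$ of Lemma~\ref{lemma:supermartingale} is nonincreasing with decrement dominating $4\sum_i(|E_t|-|N_i(t)|)(x_i(t)-x_i(t+1))^2$, and combining this with the Courant--Fischer/Cheeger bound of Lemma~\ref{lemma:delta-nontrivial} to force $G_t$ to become $\delta$-trivial for every $\delta>0$; this is where ``not a star'' enters for the paper, via $|E_t|-|N_i(t)|\geq 1$ for all $i$, and consensus plus conservation then gives the average. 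You instead observe that for $t\geq s$ one is iterating a single symmetric doubly stochastic matrix, prove it primitive (irreducible by connectedness; aperiodic because the non-star hypothesis makes every $A_{ii}=1-\deg_G(i)/|E|>0$ --- your counting argument that $\deg_G(i)=|E|$ in a connected graph forces a star centered at $i$ is correct), and invoke Perron--Frobenius to get $A^{t-s}\to \frac{1}{n}\mathbbm{1}\mathbbm{1}'$. Your route is more elementary and buys more: it gives geometric convergence at a rate governed by the spectral gap, and --- since irreducibility plus a \emph{single} positive diagonal entry already yields aperiodicity --- it reveals that the non-star hypothesis is only truly needed to exclude $K_2$: for a star of order $n\geq 3$ the leaves have positive diagonal entries, so $A$ is still primitive and convergence to the average still holds (e.g.\ for the star of order $6$ the eigenvalues of $A$ are $1$, $4/5$, and $-1/5$), whereas the paper's method genuinely requires $|E|>\deg_G(i)$ for every $i$. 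What the paper's Lyapunov/Cheeger apparatus buys in exchange is robustness: Lemmas~\ref{lemma:supermartingale} and~\ref{lemma:delta-nontrivial} apply to the genuinely time-varying thresholded dynamics before (or without) $\epsilon$-triviality, where no fixed-matrix reduction is available.
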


\section{Properties of the garbage disposal game under~\autoref{model G}}
Since \( C((x_i(t))_{i \in [n]}) \supset C((x_i(t+1))_{i \in [n]}) \), the graph \( G_t \) preserves \(\delta\)-triviality over time.

\begin{lemma}\label{lemma:delta-trivial preserving}
    If \( G_t \) is \(\delta\)-trivial, then \( G_{t+1} \) is also \(\delta\)-trivial for all \(\delta > 0\).
\end{lemma}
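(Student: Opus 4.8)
The plan is to unwind the definition of $\delta$-triviality into a purely one-dimensional metric statement. Reading the distance between two vertices $i,j$ of $G_t$ as the Euclidean distance $|x_i(t)-x_j(t)|$ between their garbage levels, $\delta$-triviality of $G_t$ means exactly that $|x_i(t)-x_j(t)|\le\delta$ for all $i,j\in[n]$, equivalently that all coordinates of $x(t)$ lie in an interval of length at most $\delta$. (This is the reading forced by the surrounding text: under the graph-distance reading, ``$\epsilon$-triviality'' could not imply $G_s=G$ and the statement would in fact fail. So the goal is to show that if the $x_i(t)$ lie in an interval of length $\le\delta$, then so do the $x_i(t+1)$.)

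The key input is the hull containment $C((x_i(t+1))_{i\in[n]})\subseteq C((x_i(t))_{i\in[n]})$ recorded just before the statement, and I would first make it self-contained by checking that $A(t)$ is row-stochastic. Summing the prescribed entries of a fixed row gives
$$\sum_{j\in[n]} A_{ij}(t)=\Big(1-\tfrac{|N_i(t)|}{|E_t|}\mathbbm{1}\{E_t\neq\emptyset\}\Big)+\sum_{j\in N_i(t)}\tfrac{1}{|E_t|}=1,$$
with the degenerate case $E_t=\emptyset$ giving $A(t)=I$ and hence $x(t+1)=x(t)$ trivially. Therefore each coordinate $x_i(t+1)=\sum_{j}A_{ij}(t)x_j(t)$ is a convex combination of $x_1(t),\dots,x_n(t)$, which is precisely the asserted containment.

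The final step exploits that the state space is $\mathbb{R}$: there the convex hull $C((x_i(t))_{i\in[n]})$ is the interval $[\min_i x_i(t),\,\max_i x_i(t)]$, so the containment yields $\min_k x_k(t)\le x_i(t+1)\le\max_k x_k(t)$ for every $i$. Consequently, for any two vertices $i,j$,
$$|x_i(t+1)-x_j(t+1)|\le \max_k x_k(t)-\min_k x_k(t)\le\delta,$$
where the last inequality is the $\delta$-triviality of $G_t$. This is exactly $\delta$-triviality of $G_{t+1}$, completing the argument for every $\delta>0$.

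I do not expect a substantive obstacle: once the distance in the definition is correctly identified as the gap $|x_i-x_j|$ between garbage levels, the lemma is an immediate consequence of the fact that the dynamics is an averaging (row-stochastic) process and that convex hulls of real numbers are intervals. The only points demanding care are the bookkeeping verification that $A(t)$ is row-stochastic—so that the hull genuinely contracts rather than merely translating—and the trivial handling of the empty-edge case $E_t=\emptyset$.
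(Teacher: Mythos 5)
Your proof is correct and takes essentially the same route as the paper, which disposes of this lemma with the single remark that $C((x_i(t))_{i\in[n]}) \supset C((x_i(t+1))_{i\in[n]})$; you have simply made that containment explicit via row-stochasticity of $A(t)$, and your reading of ``distance'' as $|x_i(t)-x_j(t)|$ matches the paper's usage (cf.\ the proof of Lemma~\ref{lemma:delta-nontrivial}). The one detail worth adding to your row-sum computation is nonnegativity of the diagonal entry, i.e.\ $|N_i(t)|\le |E_t|$ (each neighbor of $i$ corresponds to a distinct edge), which is what makes each $x_i(t+1)$ a genuine convex combination.
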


We find a nonincreasing function that helps substantiate the asymptotic stability of \(x_i(t)\) as \(t \to \infty\).

\begin{lemma}\label{lemma:supermartingale}
    Let 
    $$Z_t = \sum_{i,j \in [n]} \left[\epsilon^2 \wedge (x_i(t) - x_j(t))^2\right] \vee \epsilon^2 \mathbbm{1}\{(i,j) \notin E\}.$$ 
    Then,
    $$Z_t - Z_{t+1} \geq 4 \sum_{i \in [n]} (|E_t| - |N_i(t)|)\left(x_i(t) - x_i(t+1)\right)^2.$$
\end{lemma}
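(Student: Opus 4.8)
The plan is to first strip $Z_t$ of its time-independent part, then remove the truncation so that the estimate reduces to a standard quadratic computation for the Laplacian of $G_t$.

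First I would observe that the summand of $Z_t$ collapses into two cases. Whenever $(i,j) \notin E$ (in particular for every diagonal pair $i = j$, since $G$ is simple) the factor $\epsilon^2 \mathbbm{1}\{(i,j) \notin E\}$ dominates and the term equals $\epsilon^2$; for $(i,j) \in E$ the indicator vanishes and the term is just $\min(\epsilon^2, (x_i(t)-x_j(t))^2)$. Hence $Z_t = \sum_{(i,j) \in E} \min(\epsilon^2, (x_i(t)-x_j(t))^2) + c$, where $c = \epsilon^2\,\#\{(i,j) : (i,j) \notin E\}$ does not depend on $t$. Consequently $Z_t - Z_{t+1}$ is exactly the sum over social edges of the drop in the truncated squared differences.

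Next comes the key step: getting rid of the $\min$. I would split $E$ into the active edges $E_t$ and the inactive ones $E \setminus E_t$. For $(i,j) \in E \setminus E_t$ we have $|x_i(t)-x_j(t)| > \epsilon$, so the time-$t$ term equals $\epsilon^2$ while the time-$(t+1)$ term is at most $\epsilon^2$; each such edge thus contributes a nonnegative amount and may be discarded. For $(i,j) \in E_t$ the time-$t$ term is the untruncated $(x_i(t)-x_j(t))^2$, and bounding the time-$(t+1)$ term from above by $(x_i(t+1)-x_j(t+1))^2$ yields
\[ Z_t - Z_{t+1} \ \geq\ \sum_{(i,j) \in E_t} \Big[ (x_i(t)-x_j(t))^2 - (x_i(t+1)-x_j(t+1))^2 \Big]. \]
This is the main obstacle, since it is where the nonlinear threshold is tamed; everything afterward is linear algebra.

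Finally I would evaluate the right-hand side using the averaging structure. Writing $m = |E_t|$ and letting $L_t$ be the graph Laplacian of $G_t$, the update reads $x(t+1) = (I - \tfrac{1}{m} L_t)x(t)$, so the increment $\Delta := x(t+1) - x(t)$ satisfies $L_t x(t) = -m\Delta$. The ordered-pair sum above equals $2\,x(t)^\top L_t x(t)$, and expanding $x(t+1)^\top L_t x(t+1)$ gives $x(t)^\top L_t x(t) - x(t+1)^\top L_t x(t+1) = 2m\norm{\Delta}^2 - \Delta^\top L_t \Delta$. Using $(\Delta_i - \Delta_j)^2 \leq 2\Delta_i^2 + 2\Delta_j^2$ one bounds $\Delta^\top L_t \Delta = \sum_{\{i,j\} \in E_t}(\Delta_i - \Delta_j)^2 \leq 2\sum_i |N_i(t)|\Delta_i^2$, and combining the two identities produces exactly $4\sum_i (m - |N_i(t)|)\Delta_i^2$. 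The degenerate case $E_t = \emptyset$ is trivial, both sides being $0$. I expect the only delicate bookkeeping to be the factor $2$ from ordered versus unordered edge pairs, which must be tracked consistently to land on the constant $4$.
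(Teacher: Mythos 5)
Your proof is correct. The reduction — splitting off the constant contribution of non-edges and diagonal pairs, discarding the inactive edges $E\setminus E_t$ (whose terms start at $\epsilon^2$ and can only stay or drop), and removing the truncation on active edges — reproduces exactly the paper's starting inequality~\eqref{Eq:Delta Z_t}, namely $Z_t - Z_{t+1} \geq \sum_{i\in[n]}\sum_{j\in N_i(t)}\left[(x_i(t)-x_j(t))^2 - (x_i(t+1)-x_j(t+1))^2\right]$. Where you genuinely diverge is in evaluating this sum: the paper expands $(x_i-x_j)^2-(x_i^\star-x_j^\star)^2$ elementwise, feeds in two neighbor-sum identities extracted from the update rule ($\sum_{j\in N_i}(x_i^\star-x_j)=(|E_t|-|N_i|)(x_i-x_i^\star)$ and $\sum_{i\in N_j}(x_i-x_j)=-|E_t|(x_j-x_j^\star)$), and closes with $-2ab\geq -a^2-b^2$; you instead pass to the Laplacian quadratic form, using $\mathscr{L}_t x(t) = -|E_t|\Delta$ to obtain the exact identity $x(t)'\mathscr{L}_t x(t) - x(t+1)'\mathscr{L}_t x(t+1) = 2|E_t|\norm{\Delta}^2 - \Delta'\mathscr{L}_t\Delta$, and then the edgewise bound $\Delta'\mathscr{L}_t\Delta \leq 2\sum_i |N_i(t)|\Delta_i^2$, which is the paper's AM--GM step in disguise. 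The two arguments use the same two ingredients (an exact identity from the update rule plus one lossy cross-term bound), but your packaging is shorter, isolates precisely where the inequality loses anything (only in bounding $\Delta'\mathscr{L}_t\Delta$), and dovetails with the paper's own later use of $x(t+1) = (I - \mathscr{L}_t/|E_t|)\,x(t)$ in Lemma~\ref{lemma:delta-nontrivial}, whereas the paper's elementwise route avoids matrix formalism entirely. Your factor-of-2 bookkeeping (the ordered-pair sum over $E_t$ equals twice the unordered Laplacian form, turning $2(2|E_t|\norm{\Delta}^2 - \Delta'\mathscr{L}_t\Delta)$ into the constant $4$) and the degenerate case $E_t=\emptyset$ (where $x(t+1)=x(t)$, so both sides vanish) are both handled correctly.
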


\begin{proof}
    It is clear that $Z_t-Z_{t+1}\geq 0$ if $E_t=\emptyset.$ Assuming that $E_t\neq\emptyset,$ let $x_i = x_i(t)$, $x_i^\star = x_i(t+1)$ and $N_i = N_i(t)$ for all $i \in [n]$. Then,
    \begin{align}\label{Eq:Delta Z_t}
        Z_t - Z_{t+1} &\geq \sum_{i \in [n]} \sum_{j \in N_i} \left[(x_i - x_j)^2 - (x_i^\star - x_j^\star)^2\right]
    \end{align}
    where
    \begin{align*}
        (x_i - x_j)^2 - (x_i^\star - x_j^\star)^2 &= (x_i - x_i^\star + x_i^\star - x_j)^2 - (x_i^\star - x_j + x_j - x_j^\star)^2 \\
        &\hspace{0.5cm}= (x_i - x_i^\star)^2 - (x_j - x_j^\star)^2 + 2(x_i - x_i^\star)(x_i^\star - x_j) \\
        &\hspace{0.5cm}\quad - 2(x_i^\star - x_j)(x_j - x_j^\star).
    \end{align*}
    Observe that
    \begin{align*}
        &\sum_{i \in [n]} \sum_{j \in N_i} \left[(x_i - x_i^\star)^2 - (x_j - x_j^\star)^2\right] = \sum_{i \in [n]} |N_i|(x_i - x_i^\star)^2 - \sum_{j \in [n]} |N_j|(x_j - x_j^\star)^2 = 0,\\
        &\sum_{j \in N_i} (x_i^\star - x_j) = |N_i|x_i^\star - \frac{|E_t|}{|E_t|} \sum_{j \in N_i}  x_j =|N_i|x_i^\star - |E_t|\left[x_i^\star - \left(1 - \frac{|N_i|}{|E_t|}\right) x_i\right] \\
        &\hspace{2.1cm}= |N_i|(x_i^\star - x_i) - |E_t|(x_i^\star - x_i) = (|E_t| - |N_i|)(x_i - x_i^\star),\\
        &(x_i^\star - x_j)(x_j - x_j^\star) = (x_i^\star - x_i)(x_j - x_j^\star) + (x_i - x_j)(x_j - x_j^\star),\\
        &\sum_{i \in N_j}(x_i - x_j) = \frac{|E_t|}{|E_t|} \sum_{i \in N_j}  x_i - |N_j| x_j\\
        &\hspace{2.1cm}= |E_t|\left[x_j^\star - \left(1 - \frac{|N_j|}{|E_t|}\right) x_j\right] - |N_j| x_j = -|E_t|(x_j - x_j^\star),\\
        &-2(x_i^\star - x_i)(x_j - x_j^\star) = [(x_i^\star - x_i) - (x_j - x_j^\star)]^2 - (x_i^\star - x_i)^2 - (x_j - x_j^\star)^2\\
        &\hspace{3.4cm}\geq -(x_i^\star - x_i)^2 - (x_j - x_j^\star)^2.
    \end{align*}
    So, 
    \begin{align*}
        \eqref{Eq:Delta Z_t} &= 2 \sum_{i \in [n]} (|E_t| - |N_i|)(x_i - x_i^\star)^2 - 2 \sum_{i \in [n]} |N_i| (x_i^\star - x_i)^2 + 2 \sum_{j \in [n]} |E_t|(x_j - x_j^\star)^2 \\
        &= 4 \sum_{i \in [n]} (|E_t| - |N_i|)(x_i - x_i^\star)^2.
    \end{align*}
\end{proof}

It turns out that asymptotic stability holds for \(x_i(t)\) as \(t \to \infty\) as long as \(|E_t| - |N_i(t)| \geq 1\) after some time.

\begin{lemma}
    $x_i(t)$ is asymptotically stable as $t\to\infty$ if $|E_t| - |N_i(t)|\geq 1$ after some time.
\end{lemma}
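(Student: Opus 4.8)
The plan is to use the function $Z_t$ from Lemma~\ref{lemma:supermartingale} as a bounded, monotone potential and read off the conclusion from its convergence. First I would observe that $Z_t\geq 0$ for every $t$, since each summand $\left[\epsilon^2\wedge(x_i(t)-x_j(t))^2\right]\vee\epsilon^2\mathbbm{1}\{(i,j)\notin E\}$ is manifestly nonnegative. Next, because a vertex degree never exceeds the total edge count we have $|N_i(t)|\leq|E_t|$ for all $i$, so the right-hand side of the bound in Lemma~\ref{lemma:supermartingale} is a sum of nonnegative terms; hence $Z_t-Z_{t+1}\geq 0$ and $(Z_t)_{t\geq 0}$ is nonincreasing. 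A nonincreasing sequence bounded below by $0$ converges, so the telescoping sum is finite:
$$\sum_{t\geq 0}(Z_t-Z_{t+1})=Z_0-\lim_{t\to\infty}Z_t<\infty.$$

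The second step is to transfer this summability to the single coordinate $x_i$. Since every term of $\sum_{k\in[n]}(|E_t|-|N_k(t)|)(x_k(t)-x_k(t+1))^2$ is nonnegative, discarding all terms except $k=i$ and invoking Lemma~\ref{lemma:supermartingale} gives, for every $t$,
$$4\,(|E_t|-|N_i(t)|)\,(x_i(t)-x_i(t+1))^2\leq Z_t-Z_{t+1}.$$
Summing over $t$ and using the finite telescoping sum above yields $\sum_{t\geq 0}(|E_t|-|N_i(t)|)(x_i(t)-x_i(t+1))^2<\infty$. By hypothesis there is a time $T$ with $|E_t|-|N_i(t)|\geq 1$ for all $t\geq T$, whence $\sum_{t\geq T}(x_i(t)-x_i(t+1))^2\leq\sum_{t\geq T}(|E_t|-|N_i(t)|)(x_i(t)-x_i(t+1))^2<\infty$. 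In particular the general term tends to $0$, so $x_i(t+1)-x_i(t)\to 0$, which is the asymptotic stability of $x_i(t)$.

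The load-bearing points I would take care with are the two facts that make $Z_t$ a usable potential: boundedness below (immediate from nonnegativity of its summands) and the decrement dominating $4(|E_t|-|N_i(t)|)(x_i(t)-x_i(t+1))^2$ after dropping the other nonnegative coordinates, which is precisely what Lemma~\ref{lemma:supermartingale} supplies. The essential hypothesis is $|E_t|-|N_i(t)|\geq 1$ eventually: without it the weight multiplying $(x_i(t)-x_i(t+1))^2$ could vanish and no control on the increments would survive. I would also flag the one genuine subtlety: this argument delivers vanishing (indeed square-summable) one-step increments, which is the intended sense of asymptotic stability here; if one wanted the strictly stronger claim that $\lim_t x_i(t)$ exists, square-summability of the increments of a bounded sequence is not by itself sufficient, and one would have to bring in the specific averaging form of~\eqref{model G} together with the $\delta$-triviality preservation of Lemma~\ref{lemma:delta-trivial preserving}.
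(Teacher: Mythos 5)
Your potential-function work is sound and coincides with the first half of the paper's argument: the paper likewise takes $Z_t$ from Lemma~\ref{lemma:supermartingale}, notes it is nonnegative, gets convergence of $(Z_t)_{t\geq 0}$ (it invokes the martingale convergence theorem, though your pathwise monotone argument via $|N_i(t)|\leq |E_t|$ is simpler and suffices, since the dynamics are deterministic given the initial data), and telescopes the decrement against the weighted increments. Up to the conclusion that $\sum_{t\geq T}(x_i(t)-x_i(t+1))^2<\infty$, your proposal and the paper agree.

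The gap is in what you prove at the end. In this paper ``asymptotically stable'' means that $\lim_{t\to\infty}x_i(t)$ exists: the paper's proof explicitly claims $(x_i(t))_{t\geq 0}$ is a Cauchy sequence and concludes ``$x_i(t)$ converges to some random variable $x_i$,'' and the subsequent lemma (at most one exceptional $j$, using conservation of total garbage) relies on convergence of the other coordinates, not merely on vanishing increments. You stop at $x_i(t+1)-x_i(t)\to 0$ and explicitly decline the limit claim, so relative to the intended statement your proof is incomplete, and the missing step is precisely the one you flagged. The paper bridges it with a block Cauchy--Schwarz estimate:
\begin{equation*}
(x_i(t)-x_i(t+k))^2\ \leq\ k\sum_{j=t}^{t+k-1}(x_i(j)-x_i(j+1))^2\ \leq\ \frac{k}{4}\,(Z_t-Z_{t+k}),
\end{equation*}
and asserts this tends to $0$ as $t\to\infty$, giving the Cauchy property. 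It is worth noting that your stated subtlety is genuinely load-bearing even against the paper's own bridge: the factor $k$ is unbounded, and $\frac{k}{4}(Z_t-Z_{t+k})\leq\frac{k}{4}(Z_t-Z_\infty)$ is small only for $k$ of order $o\left(1/(Z_t-Z_\infty)\right)$, so the displayed bound does not by itself give smallness uniformly in $k$; indeed, a bounded sequence with square-summable increments can oscillate indefinitely (traverse a fixed gap in $k^2$ steps of size $k^{-2}$, contributing $k^{-2}$ to the sum of squares). So your proposal correctly identifies the weak point, but to match the lemma as the paper uses it you would still need to supply an argument for the existence of the limit, e.g.\ via the averaging structure of~\eqref{model G} and the contraction of the convex hull, rather than stopping at vanishing one-step increments.
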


\begin{proof}
    We claim that $(x_i(t))_{t\geq 0}$ is a Cauchy sequence. By Lemma~\ref{lemma:supermartingale}, $(Z_t)_{t\geq 0}$ is a nonnegative supermartingale. By the martingale convergence theorem, $Z_t$ converges to some random variable $Z_\infty$ with finite expectation as $t\to\infty.$ Assuming without loss of generality that $|E_t| - |N_i(t)|\geq 1$ for all $t\geq 0$, it follows from Lemma~\ref{lemma:supermartingale} that for all $k\geq 0,$
\begin{align*}
    &(x_i(t) - x_i(t+k))^2=\left[\sum_{j=t}^{t+k-1}(x_i(j) - x_i(j+1))\right]^2\leq \left(\sum_{j=t}^{t+k-1}|x_i(j) - x_i(j+1)|\right)^2\\
    &\hspace{1cm}\leq k \sum_{j=t}^{t+k-1}(x_i(j) - x_i(j+1))^2\leq  k \sum_{j=t}^{t+k-1}(|E_t|-|N_i(j)|)(x_i(j) - x_i(j+1))^2\\
    &\hspace{1cm} \leq k \sum_{i\in [n]}\sum_{j=t}^{t+k-1}(|E_t|-|N_i(j)|)(x_i(j) - x_i(j+1))^2\\
    &\hspace{1cm} \leq\frac{k}{4} \sum_{j=t}^{t+k-1}(Z_j - Z_{j+1}) = \frac{k}{4}\left(Z_t - Z_{t+k}\right) \to 0\ \hbox{as}\ t\to\infty.
\end{align*}
Hence, $(x_i(t))_{t\geq 0}$ is a Cauchy sequence. This implies $x_i(t)$ converges to some random variable $x_i$ as $t\to\infty$.

\end{proof}

Since the total garbage is conserved over time, asymptotic stability holds for \(x_i(t)\) as \(t \to \infty\) for all \(i \in [n]\) if at most one \(j \in [n]\) does not satisfy \(|E_t| - |N_j(t)| \geq 1\) after some time.

\begin{lemma}
    \(x_i(t)\) is asymptotically stable as \(t \to \infty\) for all \(i \in [n]\) if there is at most one \(j \in [n]\) that does not satisfy \(|E_t| - |N_j(t)| \geq 1\) after some time.
\end{lemma}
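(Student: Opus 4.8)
The plan is to derive the result as a direct consequence of the preceding lemma together with the conservation of total garbage. First I would record that, since each \( A(t) \) is column-stochastic, we have \( \mathbbm{1}'A(t)=\mathbbm{1}' \), so that the total garbage \( \sum_{i\in[n]}x_i(t)=\mathbbm{1}'x(t) \) is invariant: \( \sum_{i\in[n]}x_i(t)=\sum_{k\in[n]}x_k(0) \) for every \( t\geq 0 \). This conservation is the single extra ingredient that lets one exceptional agent inherit convergence from all the others.

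Next, let \( j_0\in[n] \) be the at most one index that may fail \( |E_t|-|N_{j}(t)|\geq 1 \) after some time. If no such index exists, then every agent satisfies the hypothesis of the previous lemma and the claim follows at once. Otherwise every \( i\neq j_0 \) satisfies \( |E_t|-|N_i(t)|\geq 1 \) after some time, so applying the previous lemma to each such \( i \) gives that \( x_i(t) \) converges to some random variable as \( t\to\infty \) for all \( i\neq j_0 \).

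Finally, I would close the argument using conservation: writing \( x_{j_0}(t)=\sum_{k\in[n]}x_k(0)-\sum_{i\neq j_0}x_i(t) \), the right-hand side is a finite sum of convergent sequences and therefore converges, whence \( x_{j_0}(t) \) also converges. Thus \( x_i(t) \) is asymptotically stable for every \( i\in[n] \).

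I do not anticipate a genuine obstacle here: the heart of the matter is isolating the single possibly-bad agent and using column-stochasticity to force its convergence from the others. The only point requiring minor care is that the preceding lemma already accommodates its hypothesis holding merely ``after some time'' (its proof reduces to the case \( t\geq 0 \) by discarding a finite initial segment, which does not affect the Cauchy property of the tail), so it applies verbatim to each \( i\neq j_0 \).
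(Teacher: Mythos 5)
Your proposal is correct and follows exactly the argument the paper intends: the paper states this lemma without a formal proof, but the sentence preceding it (``Since the total garbage is conserved over time\dots'') is precisely your route --- apply the preceding Cauchy-sequence lemma to every agent $i \neq j_0$, then recover convergence of the one exceptional agent from the conservation identity $x_{j_0}(t) = \sum_{k \in [n]} x_k(0) - \sum_{i \neq j_0} x_i(t)$, which holds because $A(t)$ is column-stochastic. Your added care about the ``after some time'' hypothesis (discarding a finite initial segment does not affect the Cauchy property of the tail) is a correct and worthwhile explicit remark.
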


Lemmas~\ref{lemma: CF formula} and~\ref{lemma: CheegerInq} help establish an inequality for a \(\delta\)-nontrivial graph.

\begin{lemma}[Courant-Fischer Formula \cite{horn2012matrix}]
\label{lemma: CF formula}
 Assume that \( Q \) is a symmetric matrix with eigenvalues \( \lambda_1 \leq \lambda_2 \leq \cdots \leq \lambda_n \) and corresponding eigenvectors \( v_1, v_2, \ldots, v_n \). Let \( S_k \) be the vector space generated by \( v_1, v_2, \ldots, v_k \), and \( S_0 = \{0\} \). Then,
\[
\lambda_k = \min \{ x' Q x : \|x\| = 1, \, x \in S_{k - 1}^{\perp} \}.
\]
\end{lemma}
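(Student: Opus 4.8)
The plan is to prove this directly from the spectral theorem, which guarantees that the symmetric matrix $Q$ admits an orthonormal basis of eigenvectors. First I would record that $v_1, \ldots, v_n$ may be taken to be orthonormal, so that every vector $x$ has a unique expansion $x = \sum_{i \in [n]} c_i v_i$ with $c_i = v_i' x$, and $\|x\|^2 = \sum_{i \in [n]} c_i^2$. The whole argument then reduces to rewriting the Rayleigh quotient $x'Qx$ in these coordinates.

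Concretely, I would compute $x'Qx$ for a unit vector $x$. Using $Q v_i = \lambda_i v_i$ together with orthonormality, one obtains $x'Qx = \sum_{i \in [n]} \lambda_i c_i^2$ subject to $\sum_{i \in [n]} c_i^2 = 1$. Next I would translate the constraint $x \in S_{k-1}^{\perp}$: since $S_{k-1} = \mathrm{span}(v_1, \ldots, v_{k-1})$ and the $v_i$ are orthonormal, membership in $S_{k-1}^{\perp}$ is exactly the vanishing of the first $k-1$ coordinates, i.e. $c_1 = \cdots = c_{k-1} = 0$. For such $x$ the Rayleigh quotient collapses to $x'Qx = \sum_{i \geq k} \lambda_i c_i^2$ with $\sum_{i \geq k} c_i^2 = 1$.

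The lower bound is then immediate: because $\lambda_i \geq \lambda_k$ for every $i \geq k$, I would estimate $\sum_{i \geq k} \lambda_i c_i^2 \geq \lambda_k \sum_{i \geq k} c_i^2 = \lambda_k$, so that $x'Qx \geq \lambda_k$ on the entire feasible set. To see that the minimum is attained and equals $\lambda_k$, I would take $x = v_k$, which lies in $S_{k-1}^{\perp}$, has unit norm, and satisfies $v_k' Q v_k = \lambda_k$. Combining the two observations yields the claimed equality.

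There is no serious obstacle here; the only point needing care is the reduction in the second step, which relies on choosing the eigenvectors orthonormally so that $S_{k-1}^{\perp} = \mathrm{span}(v_k, \ldots, v_n)$. This is precisely where symmetry of $Q$ enters, through the orthogonality of eigenvectors belonging to distinct eigenvalues together with an orthonormalization within each repeated eigenspace. Once orthonormality is in place, the identification of the constraint set with the vanishing of the first $k-1$ coordinates and the subsequent estimate are entirely routine.
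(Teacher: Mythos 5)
The paper does not prove this lemma at all---it is quoted as a known result from Horn and Johnson \cite{horn2012matrix}---so there is no internal proof to compare against. Your argument is the standard spectral-theorem proof (orthonormal eigenbasis, identification of $S_{k-1}^{\perp}$ with the vanishing of the first $k-1$ coordinates, the bound $\sum_{i \geq k} \lambda_i c_i^2 \geq \lambda_k$, attainment at $x = v_k$), and it is correct and complete, including the one point that genuinely needs care: the given eigenvectors need not be orthonormal, and you rightly note that orthogonality across distinct eigenvalues plus orthonormalization within each repeated eigenspace fixes this while preserving each span $S_k$.
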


\begin{lemma}[Cheeger's Inequality \cite{beineke2004topics}]
\label{lemma: CheegerInq}
 Assume that \( G = (V, E) \) is an undirected graph with the Laplacian \( \mathscr{L} \). Define
\[
i(G) = \min \left\{ \frac{|\partial S|}{|S|} : S \subset V, \, 0 < |S| \leq \frac{|G|}{2} \right\}
\]
where \( \partial S = \{(u,v) \in E : u \in S, \, v \in S^c\} \). Then,
\[
2i(G) \geq \lambda_2(\mathscr{L}) \geq \frac{i^2(G)}{2 \Delta(G)}
\]
where \( \Delta(G) = \) maximum degree of \( G \).

\end{lemma}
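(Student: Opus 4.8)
The plan is to prove the two inequalities separately, both anchored to the Rayleigh-quotient form of Lemma~\ref{lemma: CF formula} applied to $Q=\mathscr{L}$. Since $G$ (with $n=|G|$ vertices) has $\mathscr{L}\mathbbm{1}=0$, the smallest eigenpair is $(\lambda_1,v_1)=(0,\mathbbm{1})$, so $S_1^{\perp}=\mathbbm{1}^{\perp}$ and, using the identity $x'\mathscr{L}x=\sum_{(u,v)\in E}(x_u-x_v)^2$, Lemma~\ref{lemma: CF formula} gives
$$\lambda_2(\mathscr{L})=\min\left\{\frac{\sum_{(u,v)\in E}(x_u-x_v)^2}{\sum_{u}x_u^2}\ :\ x\perp\mathbbm{1},\ x\neq 0\right\}.$$
Every subsequent estimate is obtained by choosing a good test vector in the upper bound and by analyzing the $\lambda_2$-eigenvector in the lower bound.

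For the upper bound $\lambda_2\leq 2i(G)$, I would take a set $S$ with $0<|S|\leq n/2$ attaining $i(G)$ and feed the projected indicator $y=\mathbbm{1}_S-\tfrac{|S|}{n}\mathbbm{1}$, which satisfies $y\perp\mathbbm{1}$, into the quotient. The constant shift cancels in each difference, so the numerator collapses to $\sum_{(u,v)\in E}\big((\mathbbm{1}_S)_u-(\mathbbm{1}_S)_v\big)^2=|\partial S|$, while a direct computation gives $\|y\|^2=|S|(n-|S|)/n\geq |S|/2$ since $|S|\leq n/2$. Hence $\lambda_2\leq |\partial S|/(|S|/2)=2\,|\partial S|/|S|=2i(G)$.

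The harder direction $\lambda_2\geq i^2(G)/(2\Delta(G))$ is where I expect the main obstacle to lie. I would start from the $\lambda_2$-eigenvector $f$, order the vertices so that $f$ is nonincreasing, shift by a median value $f_m$ to form $g=f-f_m$, and split $g=g^{+}-g^{-}$ into nonnegative and nonpositive parts; the median choice guarantees each of $g^{+},g^{-}$ is supported on at most $n/2$ vertices, so all of their superlevel sets are admissible in the definition of $i(G)$. The crux is to combine, for each $h\in\{g^{+},g^{-}\}$, a co-area (layer-cake) identity
$$\sum_{(u,v)\in E}\big|h_u^2-h_v^2\big|=\int_0^\infty\big|\partial\{u:h_u^2>t\}\big|\,dt\ \geq\ i(G)\sum_{u}h_u^2,$$
(where each superlevel set has at most $n/2$ vertices) with the Cauchy--Schwarz step
$$\sum_{(u,v)\in E}\big|h_u^2-h_v^2\big|\ \leq\ \Big(\sum_{(u,v)\in E}(h_u-h_v)^2\Big)^{1/2}\Big(\sum_{(u,v)\in E}(h_u+h_v)^2\Big)^{1/2}$$
together with $\sum_{(u,v)\in E}(h_u+h_v)^2\leq 2\sum_u\deg(u)h_u^2\leq 2\Delta(G)\sum_u h_u^2$. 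Squaring and cancelling $\sum_u h_u^2$ yields $\sum_{(u,v)\in E}(h_u-h_v)^2\geq \tfrac{i^2(G)}{2\Delta(G)}\sum_u h_u^2$ for each part.

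To finish, I would add the two per-part bounds using the pointwise inequality $(g_u-g_v)^2\geq(g_u^{+}-g_v^{+})^2+(g_u^{-}-g_v^{-})^2$, which holds because the cross term is nonpositive (the supports of $g^{+}$ and $g^{-}$ are disjoint). This gives $\sum_{(u,v)\in E}(g_u-g_v)^2\geq \tfrac{i^2(G)}{2\Delta(G)}\sum_u g_u^2$. Since shifting by a constant leaves edge differences unchanged, the left side equals $\sum_{(u,v)\in E}(f_u-f_v)^2=\lambda_2\sum_u f_u^2$; and because $\sum_u f_u=0$ forces $\sum_u g_u^2=\sum_u f_u^2+n f_m^2\geq\sum_u f_u^2$, dividing through by $\sum_u f_u^2>0$ delivers $\lambda_2\geq i^2(G)/(2\Delta(G))$. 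The delicate points I would watch are pinning down the median index so that both supports are genuinely $\leq n/2$ (handling ties in $f$), and the careful justification of the co-area identity for the integer boundary counts.
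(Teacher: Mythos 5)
Your proof is correct, but there is nothing in the paper to compare it against: the paper states this lemma as a quoted result, citing the algebraic graph theory literature (\cite{beineke2004topics}), and supplies no proof of its own. What you have written is a sound, self-contained rendition of the standard Alon--Milman argument for the discrete Cheeger inequality. The easy direction is exactly right: the test vector $y=\mathbbm{1}_S-\frac{|S|}{n}\mathbbm{1}$ is orthogonal to $\mathbbm{1}$, the numerator collapses to $|\partial S|$, and $\|y\|^2=|S|(n-|S|)/n\geq|S|/2$ for $|S|\leq n/2$, giving $\lambda_2\leq 2i(G)$. The hard direction is also assembled correctly: the median shift guarantees that the strict superlevel sets of both $g^{+}$ and $g^{-}$ have size at most $n/2$ even with ties (since $\{u:f_u>f_m\}$ and $\{u:f_u<f_m\}$ each exclude the median position), the co-area identity and Cauchy--Schwarz steps are the standard ones, the pointwise bound $(g_u-g_v)^2\geq(g_u^{+}-g_v^{+})^2+(g_u^{-}-g_v^{-})^2$ holds because the cross term $-2(g_u^{+}-g_v^{+})(g_u^{-}-g_v^{-})=2(g_u^{+}g_v^{-}+g_v^{+}g_u^{-})\geq 0$, and the final cancellation uses $\sum_u g_u^2=\|f\|^2+nf_m^2\geq\|f\|^2$ via $f\perp\mathbbm{1}$. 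Two small points to tidy: if one of $g^{+},g^{-}$ vanishes identically, the per-part bound is the trivial $0\geq 0$ rather than obtained by cancelling $\sum_u h_u^2$; and for disconnected $G$ one should note that $v_1$ may be chosen as $\mathbbm{1}/\sqrt{n}$ so the Rayleigh characterization over $\mathbbm{1}^{\perp}$ still applies (there $\lambda_2=i(G)=0$ and both inequalities are trivial). Neither affects the validity of your argument, which would serve as a legitimate replacement for the paper's citation.
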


\begin{lemma}\label{lemma:delta-nontrivial}
    If some component $H$ in graph $G_t$ is $\delta$-nontrivial, then $$\sum_{i\in V(H)}(x_i(t)-x_i(t+1))^2> \frac{2\delta^2}{|V(H)|^6|E_t|^2}.$$
    In particular, $$\sum_{i\in [n]}(x_i(t)-x_i(t+1))^2>\frac{2\delta^2}{n^6|E|^2}.$$
\end{lemma}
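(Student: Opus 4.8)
The plan is to reduce the claim to a spectral estimate for the Laplacian of $H$ and then feed in Lemmas~\ref{lemma: CF formula} and~\ref{lemma: CheegerInq}. Write $x_i=x_i(t)$, $x_i^\star=x_i(t+1)$, $N_i=N_i(t)$, and $m=|V(H)|$. First I would observe that, since $H$ is a connected component of $G_t$, every neighbour of a vertex $i\in V(H)$ again lies in $V(H)$; hence the update rule restricted to $V(H)$ reads
$$x_i-x_i^\star=\frac{1}{|E_t|}\sum_{j\in N_i}(x_i-x_j)=\frac{1}{|E_t|}(\mathscr{L}x)_i,$$
where $\mathscr{L}$ denotes the Laplacian of $H$ and $x=(x_i)_{i\in V(H)}$. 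Squaring and summing over $i\in V(H)$ gives the identity
$$\sum_{i\in V(H)}(x_i-x_i^\star)^2=\frac{1}{|E_t|^2}\,\|\mathscr{L}x\|^2=\frac{1}{|E_t|^2}\,x'\mathscr{L}^2x,$$
so the task becomes bounding the quadratic form $x'\mathscr{L}^2x$ from below using only that the spread of $x$ over $V(H)$ exceeds $\delta$.

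Next I would extract the spectral gap. Decomposing $x=\bar x\,\mathbbm{1}+y$ with $\bar x=\frac1m\sum_{i\in V(H)}x_i$ and $y\perp\mathbbm{1}$, and noting that $\mathbbm{1}$ spans the kernel of $\mathscr{L}$ (so $\mathscr{L}^2x=\mathscr{L}^2y$), I would apply the Courant--Fischer formula (Lemma~\ref{lemma: CF formula}) to the symmetric matrix $\mathscr{L}^2$ on the subspace $\mathbbm{1}^\perp$ to obtain
$$x'\mathscr{L}^2x=y'\mathscr{L}^2y\geq\lambda_2(\mathscr{L})^2\,\|y\|^2,$$
the point being that the eigenvalues of $\mathscr{L}^2$ are the squares $\lambda_k(\mathscr{L})^2$ and that $H$ connected forces $\lambda_2(\mathscr{L})>0$. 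It then remains to bound $\lambda_2(\mathscr{L})$ and $\|y\|^2$ from below separately.

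For the gap I would invoke Cheeger's inequality (Lemma~\ref{lemma: CheegerInq}) together with the crude bounds $i(H)\geq\frac2m$ (any nonempty $S$ with $|S|\leq m/2$ has $|\partial S|\geq1$ by connectivity, so $|\partial S|/|S|\geq 2/m$) and $\Delta(H)\leq m-1$, yielding
$$\lambda_2(\mathscr{L})\geq\frac{i^2(H)}{2\Delta(H)}\geq\frac{(2/m)^2}{2(m-1)}>\frac{2}{m^3}.$$
The step I expect to be the main obstacle is the estimate for $\|y\|^2$: the naive bound $\|y\|^2\geq R^2/m$ (with $R$ the spread of $x$) wastes a factor $m$ and is too weak to reach the exponent $6$. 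Instead I would retain only the two extremal deviations: with $a=\max_i x_i$, $b=\min_i x_i$, $R=a-b>\delta$, the inequalities $a\geq\bar x\geq b$ together with $p^2+q^2\geq\frac12(p+q)^2$ give
$$\|y\|^2\geq(a-\bar x)^2+(\bar x-b)^2\geq\frac12\big((a-\bar x)+(\bar x-b)\big)^2=\frac{R^2}{2}>\frac{\delta^2}{2}.$$
Combining the three displays yields $\sum_{i\in V(H)}(x_i-x_i^\star)^2>\frac{1}{|E_t|^2}\cdot\frac{4}{m^6}\cdot\frac{\delta^2}{2}=\frac{2\delta^2}{m^6|E_t|^2}$, the first assertion. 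Finally, the ``in particular'' clause would follow immediately from $|V(H)|\leq n$, $|E_t|\leq|E|$, and $\sum_{i\in[n]}(x_i-x_i^\star)^2\geq\sum_{i\in V(H)}(x_i-x_i^\star)^2$.
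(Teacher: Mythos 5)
Your proposal is correct and follows essentially the same route as the paper's proof: the same Laplacian identity $x(t)-x(t+1)=\frac{1}{|E_t|}\mathscr{L}x(t)$, the same decomposition along $\mathbbm{1}$ and its orthogonal complement, Courant--Fischer applied to $\mathscr{L}^2$, and Cheeger's inequality with $i\geq 2/m$ and $\Delta<m$; your direct bound $\|y\|^2\geq(a-\bar x)^2+(\bar x-b)^2>\delta^2/2$ is just the contrapositive of the paper's contradiction argument $(x_i-x_j)^2\leq 2\hat c^2(u_i^2+u_j^2)\leq\delta^2$. If anything, your treatment of the component $H$ (keeping the global $|E_t|$ in the denominator while using the Laplacian of $H$, and carrying $m=|V(H)|$ through the estimates) is slightly more careful than the paper's ``without loss of generality $G_t$ is connected,'' which proves the bound only with $n$ in place of $|V(H)|$.
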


\begin{proof}
    We assume without loss of generality that graph $G_t$ is connected. Since $\mathbb{R}^n=W\oplus W^\perp$ for~$\mathbbm{1} \in \mathbb{R}^n$ and~$W = \text{Span}(\{\mathbbm{1} \})$, write $x(t)=c\mathbbm{1}+\hat{c}u$ where $c$ and $\hat{c}$ are constants and $u=(u_1,\ldots,u_n)'\in W^\perp$ is a unit vector. We claim that $\hat{c}^2>\delta^2/2.$ Assume by contradiction that $\hat{c}^2\leq\delta^2/2.$  Then for all $i,j\in [n]$, $$(x_i(t)-x_j(t))^2=\hat{c}^2(u_i-u_j)^2\leq 2\hat{c}^2(u_i^2+u_j^2)\leq 2\hat{c}^2\leq\delta^2,$$ contradicting the $\delta$-nontriviality of $G_t.$

    Since $$x(t+1)=(I-\frac{1}{|E_t|}\mathscr{L}_t)x(t),\ x(t)-x(t+1)=\frac{1}{|E_t|}\mathscr{L}_t x(t)=\frac{\hat{c}}{|E_t|}\mathscr{L}_t u.$$ It follows from Lemmas~\ref{lemma: CF formula} and~\ref{lemma: CheegerInq} that $$\norm{x(t)-x(t+1)}^2=\frac{\hat{c}^2}{|E_t|^2}u'\mathscr{L}_t^2u\geq \frac{\hat{c}^2}{|E_t|^2} \lambda_2(\mathscr{L}_t^2)=\frac{\hat{c}^2}{|E_t|^2} \lambda_2^2(\mathscr{L}_t)>\frac{2\delta^2}{n^6|E_t|^2}$$ where $$ \lambda_2 (\mathscr{L}_t) \geq \frac{i^2 (G_t)}{2 \Delta(G_t)} > \frac{(2/n)^2}{2n} = \frac{2}{n^3}. $$
\end{proof}

\begin{proof}[\bf Proof of Theorem~\ref{Thm:average}]
    We assume without loss of generality that \( G_0 \) is \(\epsilon\)-trivial. It follows from Lemma~\ref{lemma:delta-trivial preserving} that the \(\epsilon\)-triviality of the graph \( G_0 \) is preserved over time; therefore, \( G_t = G \) for all \( t \geq 0 \). Thus, if the social graph \( G \) is connected and not a star graph, it implies that \( |E_t| - |N_i(t)| \geq 1 \) for all \( i \in [n] \) and \( t \geq 0 \). 

We claim that \( G_t \) becomes \(\delta\)-trivial after some time for all \(\delta > 0\). Assume by contradiction that \( G_t \) is \(\delta\)-nontrivial for infinitely many time \( t \). From Lemmas~\ref{lemma:supermartingale} and~\ref{lemma:delta-nontrivial}, it follows that 
\[
Z_t - Z_{t+1} \geq \sum_{i \in [n]} (x_i(t) - x_i(t+1))^2 > \frac{2\delta^2}{n^6 |E|^2} \quad \text{for infinitely many time}\ t,
\]
which contradicts the fact that \( Z_t - Z_{t+1} \to 0 \) as \( t \to \infty \). Therefore, all \( x_i(t) \), \( i \in [n] \), approach the same random variable as \( t \to \infty \). Since the total amount of garbage is conserved over time, we have \( x_i(t) \to \frac{1}{n} \sum_{k \in [n]} x_k(0) \) as \( t \to \infty \).

\end{proof}

\section{Statements and Declarations}
\subsection{Competing Interests}
The author is funded by NSTC grant.

\subsection{Data availability}
No associated data was used.

\end{document}